\newtheorem{theorem}{Theorem}[section]
\newtheorem{corollary}[theorem]{Corollary}
\newtheorem{prob}[theorem]{Problem}
\newtheorem{claim}[theorem]{Claim}
\theoremstyle{definition}
\def\cE{\mathcal{E}}
\def\bN{\mathbb{N}}
\newcommand{\Ex}{\mathbb{E}}
\newcommand{\depth}{\operatorname{depth}}
\def\longequation{$$\vcenter\bgroup\advance\hsize by -9em%
\noindent\ignorespaces\refstepcounter{equation}}%
\def\endlongequation{\egroup\eqno(\theequation)$$\global\@ignoretrue}
\let\oldrceil\rceil
\renewcommand{\rceil}{\right\oldrceil}
\let\oldlceil\lceil
\renewcommand{\lceil}{\left\oldlceil}
\begin{document}

\title{Exact distance coloring in trees}

\author[N. Bousquet]{Nicolas Bousquet}
\address{Univ. Grenoble Alpes, CNRS, G-SCOP, Grenoble, France}
\email{nicolas.bousquet@grenoble-inp.fr}

\author[L. Esperet]{Louis Esperet} 
\address{Univ. Grenoble Alpes, CNRS, G-SCOP, Grenoble, France}
\email{louis.esperet@grenoble-inp.fr}

\author[A. Harutyunyan]{Ararat Harutyunyan} 
\address{LAMSADE, University of Paris-Dauphine, Paris, France}
\email{ararat.harutyunyan@dauphine.fr}

\author[R. de Joannis de Verclos]{R\'emi de Joannis de Verclos}
\address{Radboud University Nijmegen, Netherlands}
\email{r.deverclos@math.ru.nl}

\thanks{The authors were partially supported by ANR Projects STINT
  (\textsc{anr-13-bs02-0007}) and GATO (\textsc{anr-16-ce40-0009-01}) and LabEx PERSYVAL-Lab
  (\textsc{anr-11-labx-0025-01}) and LabEx CIMI}

\date{}

\sloppy

\begin{abstract}
For an integer $q\ge 2$ and an even integer $d$, consider the graph
obtained from a large
complete $q$-ary tree by connecting with an edge any two vertices at
distance exactly $d$ in the tree. This graph has clique number $q+1$, and
the purpose of this short note is to prove that its chromatic number is $\Theta\big(\tfrac{d
  \log q}{\log d}\big)$. It was not known that the chromatic number of
this graph grows with $d$. As a simple corollary of our result, we give a
negative answer to a problem of Van den Heuvel and Naserasr, asking
whether there is a constant $C$ such that for any odd integer $d$, any
planar graph can be colored with at most $C$ colors such that any pair of
vertices at distance exactly $d$ have distinct colors.
Finally, we study interval coloring of trees (where vertices at
distance at least $d$ and at most $cd$, for some real $c>1$, must be
assigned distinct colors), giving a sharp upper bound in the case of
bounded degree trees. 
\end{abstract}

\maketitle

\section{Introduction}

Given a metric space $X$ and some real $d>0$, let $\chi(X,d)$ be the
minimum number of colors in a coloring of the elements of $X$ such
that any two elements at distance exactly $d$ in $X$ are assigned
distinct colors. The classical Hadwiger-Nelson problem asks for the value
of $\chi(\mathbb{R}^2,1)$, where $\mathbb{R}^2$ is the Euclidean
plane. It is known that $5\le \chi(\mathbb{R}^2,1)\le 7$~\cite{Gre18} and since the Euclidean
plane $\mathbb{R}^2$ is invariant under homothety,
$\chi(\mathbb{R}^2,1)=\chi(\mathbb{R}^2,d)$ for any real
$d>0$. Let $\mathbb{H}^2$ denote the hyperbolic
plane. Kloeckner~\cite{Klo15} proved that $\chi(\mathbb{H}^2,d)$ is at
most linear in $d$ (the multiplicative constant was recently improved
by Parlier and Petit~\cite{PP17}), and observed that
$\chi(\mathbb{H}^2,d)\ge 4$ for any $d>0$. He raised the question of determining whether
$\chi(\mathbb{H}^2,d)$ grows with $d$ or can be bounded independently
of $d$. As noticed by Kahle (see~\cite{Klo15}), it is not known whether
$\chi(\mathbb{H}^2,d)\ge 5$ for some real $d>0$. Parlier and
Petit~\cite{PP17} recently suggested to study infinite regular trees
as a discrete analog of the hyperbolic plane. Note that any graph $G$
can be considered as a metric space (whose elements are the vertices
of $G$ and whose metric is the graph distance in $G$), and in this
context $\chi(G,d)$ is precisely the minimum number of colors in a
vertex coloring of $G$ such that vertices at distance $d$ apart are
assigned different colors. Note that $\chi(G,d)$ can be equivalently
defined as the chromatic number of the \emph{exact $d$-th power} of
$G$, that is,
the graph with the same vertex-set as $G$ in which two vertices are adjacent if
and only if they are at distance exactly $d$ in $G$. 

Let $T_q$ denote the infinite $q$-regular tree. Parlier and
Petit~\cite{PP17} observed that when $d$ is odd, $\chi(T_q,d) =2$ and
proved that when $d$ is even, 
$q\le \chi(T_q,d) \le (d+1)(q-1)$. A similar upper bound can also be
deduced from the results of Van den Heuvel, Kierstead, and Quiroz~\cite{HOQ16}, while the lower bound is a
direct consequence of the fact that when $d$ is even, the clique number of the exact $d$-th
power of $T_q$ is $q$ (note that it does not depend on $d$). In this
short note, we prove that when $q \geq 3$ is fixed, $$\tfrac{d \log(q-1)}{4\log(d/2)+4\log(q-1)}\le \chi(T_q,d) \le 
(2+o(1))\tfrac{d \log(q-1)}{\log d},$$ 

\smallskip

\noindent where the asymptotic $o(1)$ is in terms of $d$.
A simple consequence of our main result is that for any even integer $d$, the exact $d$-th power of
a complete binary tree of depth $d$ is of order $\Theta(d/\log d)$
(while its clique number is equal to 3).

\medskip

The following problem (attributed to Van den Heuvel and Naserasr) was
raised in~\cite{NO12} (see also~\cite{HOQ16} and~\cite{NO15}).

\begin{prob}[Problem 11.1 in~\cite{NO12}]\label{prob:NO}
Is there a constant $C$ such that for every odd integer $d$ and every
planar graph $G$ we have $\chi(G,d)\le C$?
\end{prob}

We will show that our result on large complete binary
trees easily implies a negative answer to Problem~\ref{prob:NO}. More
precisely, we will prove that the graph $U_3^d$ obtained from a complete
binary tree of depth $d$ by adding an edge between any two vertices
with the same parent gives a negative answer to Problem~\ref{prob:NO}
(in particular, for odd $d$, the chromatic number of the exact $d$-th power of $U_3^d$ grows
as $\Theta(d/\log d)$). We will also prove that the exact $d$-th power
of a specific subgraph $Q_3^d$ of $U_3^d$ grows as $\Omega(\log
d)$. Note that $U_3^d$ and $Q_3^d$ are outerplanar (and thus,
planar) and chordal (see Figure~\ref{fig:Q}). 

\medskip

Kloeckner~\cite{Klo15} proposed the following variant of the original problem: 
For a metric space $X$, an integer $d$ and a real $c>1$, we denote by $\chi(X,[d,cd])$ the
smallest number of colors in a coloring of the elements of $X$ such
that any two elements of $X$ at distance at least $d$ and at most
$cd$ apart have
distinct colors. Considering as above the natural metric space defined
by the 
infinite $q$-regular tree $T_q$, Parlier and
Petit~\cite{PP17} proved that $$q(q-1)^{\lfloor cd/2\rfloor -\lfloor
  d/2\rfloor}\le \chi(T_q,[d,cd])\le (q-1)^{\lfloor
  cd/2+1\rfloor}(\lfloor cd \rfloor+1).$$ 

We will show that $\chi(T_q,[d,cd])\le \tfrac{q}{q-2}(q-1)^{\lfloor
cd/2\rfloor -d/2+1}+cd+1$, which implies that the lower bound of Parlier and
Petit~\cite{PP17} (which directly follows from a clique size argument) is
asymptotically sharp.

\section{Exact distance coloring}

Throughout the paper, we assume that the infinite $q$-regular tree $T_q$ is rooted in
some vertex $r$. This naturally
defines the children and descendants of a vertex and the parent and
ancestors of a vertex distinct from $r$. In particular, given a vertex
$u$, we define the ancestors $u^0,u^1,\ldots$ of $u$ inductively as
follows: $u^0=u$ and for any $i$ such that $u^i$ is not the root,
$u^{i+1}$ is the parent of $u^i$. With this notation, $u^d$ can be
equivalently defined as the ancestor of $u$ at distance $d$ from $u$
(if such a vertex exists). For a given vertex $u$ in $T_q$, the \emph{depth} of
$u$, denoted by $\depth(u)$, is the distance between $u$ and $r$ in $T_q$.
For a
vertex $v$ and an integer $\ell$, we define $L(v,\ell)$ as the set of descendants of $v$ at distance
exactly $\ell$ from $v$ in $T_q$.

\medskip

We first prove an upper bound on $\chi(T_q,d)$.

\begin{theorem}\label{thm:upext}
For any integer $q\ge 3$, any even integer $d$, and any integer
$k\ge 1$ such that $k(q-1)^{k-1}\le d$, we have $\chi(T_q,d)\le (q-1)^{k}+(q-1)^{\lfloor k/2
  \rfloor}+\tfrac{d}{k} +1$. In particular, $\chi(T_q,d)\le
d+q + 1$, and when $q$ is fixed and $d$ tends to infinity,
$\chi(T_q,d)\le (2+o(1))\tfrac{d \log(q-1)}{\log d}$.
\end{theorem}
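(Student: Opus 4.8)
The plan is to reduce the two ``in particular'' assertions to the main inequality and then to build the coloring explicitly. For $k=1$ the hypothesis $k(q-1)^{k-1}\le d$ reads $1\le d$, and the bound evaluates to $(q-1)+1+d+1=d+q+1$, giving the first assertion. For the asymptotic statement I would choose $k$ so that the two dominant terms balance, i.e. $(q-1)^k\approx d/k$, equivalently $k(q-1)^k\approx d$; this forces $k(q-1)^{k-1}\approx d/(q-1)\le d$, so the hypothesis holds, and it yields $k=(1-o(1))\tfrac{\log d}{\log(q-1)}$, hence $d/k=(1+o(1))\tfrac{d\log(q-1)}{\log d}$. Since then $(q-1)^k\approx d/k$ while $(q-1)^{\lfloor k/2\rfloor}=O(\sqrt{d/k})=o(d/k)$, the four terms sum to $(2+o(1))\tfrac{d\log(q-1)}{\log d}$. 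Both reductions are routine once the main inequality is proved.

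For the main inequality I would first record the structural description of distance-$d$ pairs. If $u,v$ are at distance exactly $d$ with lowest common ancestor $w$, and $a=\depth(u)-\depth(w)$, $b=\depth(v)-\depth(w)$, then $a+b=d$; since $d$ is even, the depth difference $|a-b|$ is even and lies in $\{0,2,\dots,d\}$. The crucial observation is that when $a,b\ge 1$ the vertices $u^{a-1}$ and $v^{b-1}$ are two distinct children of $w$, so if $\lambda(x)$ denotes the rank of a vertex $x$ among the children of its parent (a labelling in which siblings receive distinct values), then $\lambda(u^{a-1})\ne\lambda(v^{b-1})$. This identity is what lets a few branch-labels, read at the right height, certify that two vertices branch apart at the correct distance. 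I would then colour each vertex by combining three ingredients on disjoint palettes, routed according to where the branching vertex of a potential distance-$d$ partner must sit relative to a depth-determined window: a coarse counter $\lfloor\depth(u)/k\rfloor$ reduced modulo a period of size $\tfrac{d}{k}+O(1)$, which separates every pair whose depth difference is not resonant with the period (in particular all pairs with moderate $|a-b|$); a fine word of the $k$ branch-labels $\lambda(u^0),\dots,\lambda(u^{k-1})$ immediately above $u$, using $(q-1)^k$ colours, which catches the resonant pairs whose branch vertex $w$ lies within this length-$k$ window (this covers the ancestor--descendant pairs, where $a=0$); and, for the symmetric regime $a=b=d/2$ where the counter is blind, a window of $\lfloor k/2\rfloor$ labels read around height $d/2$, using $(q-1)^{\lfloor k/2\rfloor}$ colours, where the identity above guarantees that the recorded words of $u$ and $v$ differ. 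The lone extra colour yields the ``$+1$''.

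The verification is a case analysis on the depth difference $|a-b|$, checking that for each value some ingredient separates $u$ from $v$. The ancestor--descendant and large-gap cases fall quickly to the counter and the fine window. The genuine difficulty, and the step I expect to be the main obstacle, is the symmetric and near-symmetric regime: there the depth carries no usable information, one is forced to locate the branch vertex $w$ purely through the label windows, and one must choose the period, the block length $k$, and the window offsets so that the windows of any two vertices at distance exactly $d$ are guaranteed to straddle their common ancestor while the palettes stay within the claimed sizes. This is precisely where the hypothesis $k(q-1)^{k-1}\le d$ enters: it ensures that the $\Theta((q-1)^k)$ possible local configurations can be laid out over a depth interval of length $d$ without two of them colliding, so that the windowing is consistent with a palette of size $\tfrac{d}{k}+(q-1)^k+(q-1)^{\lfloor k/2\rfloor}+1$.
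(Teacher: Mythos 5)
Your reductions of the two ``in particular'' claims to the main inequality are fine and match the paper. The core construction, however, has a genuine gap. You propose to give each vertex a colour from one of three \emph{disjoint} palettes (counter, fine word of branch-labels, half-window), ``routed according to where the branching vertex of a potential distance-$d$ partner must sit.'' This routing is not well-defined: a single vertex $u$ has distance-$d$ partners whose branching vertices sit at every depth $\depth(u)-a$ for $a=0,1,\dots,d$, so the choice of palette cannot be a function of $u$ alone as described. If you instead take a product of the three ingredients, the palette has size roughly $(q-1)^{3k/2}\cdot d/k$, far exceeding the claimed sum; if you partition the vertices and use one ingredient per class, no single ingredient separates all same-class pairs (e.g.\ two vertices at equal depth and distance $d$ have equal counters, and the $k$ labels \emph{immediately above} each of them are unconstrained since they live $d$ apart). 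This last point is the deeper problem: your label windows are read near $u$, but the information needed to separate the symmetric regime $a=b=d/2$ lives at the common ancestor, which is $d/2$ levels above $u$.

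The paper's proof supplies exactly the idea your sketch is missing: each vertex $x$ at depth at least $d/2$ is coloured by the colour $C(u)$ of its unique ``special'' ancestor $u$ (depth a multiple of $k$) lying at distance between $d/2-k$ and $d/2-1$ above $x$. Two vertices at distance exactly $d$ then have special representatives that are either cousins at equal depth, or at depths differing by exactly $k$ and within distance $k$ of each other's $k$-th ancestors, or related by an ancestor relation $v=u^{ik}$; so it suffices to colour the special vertices greedily so that each one avoids the colours of its at most $(q-1)^{k}+(q-1)^{\lfloor k/2\rfloor}-1$ ``relatives'' and its at most $d/k$ ancestors at multiples of $k$. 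The three terms in the bound arise as this degree bound in a greedy colouring of a single palette, not as three separate palettes, and the hypothesis $k(q-1)^{k-1}\le d$ is used only to absorb the slightly larger cousin count of special vertices whose $k$-th ancestor is the root. Without this ``shift the colour down by $d/2$'' mechanism, I do not see how to complete your case analysis in the symmetric and near-symmetric regimes.
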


\begin{proof}
A vertex of $T_q$ distinct from $r$ and whose depth is a multiple of $k$ is said to
be a \emph{special vertex}. Let $v$ be a special vertex. Every special
vertex
$u$ distinct from $v$
such that $u^k=v^k$ is called a \emph{cousin} of $v$. Note that $v$ has at most $q(q-1)^{k-1}-1$
cousins (at most $(q-1)^{k}-1$ if $v^k\ne r$). A special vertex $u$ is said to be a \emph{relative} of $v$ if $u$ is
either a cousin of $v$, or $u$ has the
property that $u$ and $v^k$ have the same depth and are at distance at
most $k$ apart in $T_q$. 
Two vertices $a,b$ at distance at most $k$ apart and at the same depth must satisfy
  $a^{\lfloor k/2 \rfloor}=b^{\lfloor k/2 \rfloor}$, and so the number of vertices $u$ such that $u$ and $v^k$ have the same depth and are at distance at
most $k$ apart in $T_q$ is $(q-1)^{\lfloor k/2\rfloor}$.
It follows that if $v^k=r$, then $v$ has at most
$q(q-1)^{k-1}-1$
relatives and otherwise $v$ has at most $(q-1)^{k}+(q-1)^{\lfloor
  k/2\rfloor}-1$ relatives.

The first step is to define a coloring $C$ of the special vertices of
$T_q$. This will be used later to define the desired coloring of
$T_q$, i.e.\ a coloring such that vertices of $T_q$ at distance $d$ apart are assigned
distinct colors (in this second coloring, the special vertices will
not retain their color from $C$).

\smallskip

We greedily assign a color $C(v)$ to
each special vertex $v$ of $T_q$ as follows: we consider the vertices of $T_q$ in a
breadth-first search starting at $r$, and for each special vertex $v$
we encounter, we assign to $v$ a color distinct from the colors already
assigned to its relatives, and from the set of ancestors $v^{ik}$ of $v$,
where $2\le i \le \tfrac{d}{k}+1$ (there are at most $\tfrac{d}{k}$
such vertices).
Note that if $v^k=r$, the number of colors forbidden
for $v$ is at most $q(q-1)^{k-1}-1$ and if
$v^k\ne r$ the number of colors forbidden for $v$ is at most $(q-1)^{k}+(q-1)^{\lfloor k/2\rfloor}+\tfrac{d}{k}-1$. 
Since $k(q-1)^{k-1}\le d$, in both cases
$v$ has at most $(q-1)^{k}+(q-1)^{\lfloor k/2\rfloor}+\tfrac{d}{k}-1$
forbidden colors, therefore we can obtain the coloring $C$ by using
at most $(q-1)^{k}+(q-1)^{\lfloor k/2\rfloor}+\tfrac{d}{k}$ colors.

\smallskip

For any special vertex $v$, the set of descendants of $v$ at distance at least $d/2-k$ and at most
$d/2-1$ from $v$ is denoted by $K(v,k)$.
We now define
the desired coloring of $T_q$ as follows: for each special vertex $v$,
all the vertices of $K(v,k)$ are
assigned the color $C(v)$. Finally, all the vertices at distance at
most $d/2-1$ from $r$ are colored with a single new color (note that any two
vertices in this set lie at distance less than $d$ apart). The resulting
vertex-coloring of $T_q$ is called $c$. Note that $c$ uses at most
$(q-1)^{k}+(q-1)^{\lfloor k/2\rfloor}+\tfrac{d}{k}+1$ colors, and indeed
every vertex of $T_q$ gets exactly one color.

We now prove that vertices at distance
$d$ apart in $T_q$ are assigned distinct colors in $c$.
Assume for the sake of contradiction that two vertices $x$ and $y$ at distance $d$ apart were
assigned the same color. Then the depth of both $x$ and $y$ is at
least $d/2$. We can assume by symmetry  that
the difference $t$ between the depth of $x$ and the depth of $y$ is such that $0\le t \le d$
since otherwise they would be at distance more than $d$.
Let $u$ be the unique (special) vertex of $T_q$ such that $x \in
K(u,k)$ and $v$ be the unique (special) vertex such that $y\in
K(v,k)$. By the definition of our coloring $c$, we have $C(u)=C(v)$. Note that $u$ and $v$ are distinct;
indeed, otherwise $x$ and $y$
would not be at distance $d$ in $T_q$. Assume first that
$u$ and $v$ have the same depth. Then since $u$ and $x$ (resp. $v$ and
$y$) are distance at least $d/2-k$ apart, $u$ and $v$ are
cousins (and thus, relatives), which contradicts the definition of the vertex-coloring
$C$. We may, therefore, assume that the depths of $u$ and $v$ are distinct. Moreover, since $u$ and $v$ are special
vertices, we may assume that their depths differ 
by at least $k$. In particular, $u$ lies deeper than $v$ in $T_q$. 

First assume
that the depths of $u$ and $v$ differ by at least $2k$. Then $v$ is not an ancestor of $u$ in $T_q$.
Indeed, for otherwise
we would have $v=u^{ik}$ for some integer $2\le i \le \tfrac{d}{k}+1$,
which would contradict the definition of $C$. This implies that the
distance between $x$ and $y$ is at least $d/2-k+d/2-k+2k+2=d+2$, which
is a contradiction. Therefore, we can assume that the depths of $u$ and $v$ differ by
precisely $k$. Since $v$ is not a relative of $u$, we have that $v\ne u^k$
and the distance between $u^k$ and $v$ is more than $k$. Moreover, since $u$ and $x$ (resp. $v$ and
$y$) are at distance at least $d/2-k$ apart, this implies that the distance between $x$
and $y$ is more than $d/2 - k + k + k + d/2-k = d$, a contradiction.

Thus, $c$ is a proper coloring.

\medskip

By taking $k=1$ we obtain a coloring $c$ using 
at most $(q-1)^{1}+(q-1)^{\lfloor 1/2
  \rfloor}+\tfrac{d}{1} +1=q+d + 1$ colors, and by taking
$k=\lfloor \tfrac{\log d-\log \log d+\log \log (q-1)}{\log (q-1)}\rfloor$,
we obtain a coloring $c$ using at most $$\tfrac{d \log(q-1)}{\log
  d}+\sqrt{\tfrac{d \log(q-1)}{\log d}}+\tfrac{d \log(q-1)}{\log
  d-\log \log d+\log \log (q-1)-\log (q-1)}+1=(2+o(1)) \tfrac{d \log(q-1)}{\log
  d}$$ colors.
\end{proof}

For $k=1$, the proof above can be optimized to show that
$\chi(T_q,d)\le q+\tfrac{d}{2}$ (by simply noting that vertices at even
depth and vertices at odd depth can be colored independently). Since
we are mostly interested in the asymptotic behaviour of $\chi(T_q,d)$
(which is of order $O\big(\tfrac{d}{\log d}\big)$),
we omit the details.

\medskip

We now prove a simple lower bound on $\chi(T_q,d)$.  Let $T_q^d$ be the rooted complete $(q-1)$-ary tree of
depth $d$, with root $r$. Note that each node has $q-1$ children, so this
graph is a subtree of $T_q$.

\begin{theorem}\label{thm:loext1}
For any integer $q\ge 3$ and any even $d$, $\chi(T_q^d,d)\ge \log_2(\tfrac{d}4+q-1)$.
\end{theorem}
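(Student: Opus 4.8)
The plan is to set up a recursion in the depth, driven by a distance-doubling embedding, and to try to extract one extra colour each time the depth is doubled. The key structural fact I would establish first is that the exact $(d/2)$-th power of the smaller tree $T_q^{d/2}$ occurs as an induced subgraph of the exact $d$-th power of $T_q^d$. To see this, map each vertex of $T_q^{d/2}$ with address $(c_1,\dots,c_m)$ (the sequence of child-choices read from the root) to the vertex of $T_q^d$ with address $(c_1,f,c_2,f,\dots,c_m,f)$, where $f$ is a fixed child-label; call this map $D$. The inserted copies of $f$ force any two images to branch apart at an \emph{even} depth, so $\depth(D(u))=2\depth(u)$, and if $u,u'$ have lowest common ancestor at depth $p$ then $D(u),D(u')$ have theirs at depth $2p$. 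Hence the distance between $D(u)$ and $D(u')$ in $T_q^d$ is exactly twice the distance between $u$ and $u'$ in $T_q^{d/2}$, so $D(u),D(u')$ are at distance $d$ if and only if $u,u'$ are at distance $d/2$. This gives the monotonicity $\chi(T_q^d,d)\ge\chi(T_q^{d/2},d/2)$ for free, and isolating the base of the recursion: a single vertex $w$ at depth $d/2$ has the set $L(w,d/2)$ split into $q-1$ classes that are pairwise at distance $d$, a complete $(q-1)$-partite configuration forcing $q-1$ colours.

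The heart of the plan is to upgrade monotonicity to a strict gain, $\chi(T_q^d,d)\ge\chi(T_q^{d/2},d/2)+1$, so that the halvings accumulate to $\Omega(\log d)$. For this I would place, beside the stretched copy $H:=D(T_q^{d/2})$, one extra vertex $z$ adjacent (in the exact $d$-th power) to \emph{every} vertex of a $\chi(H)$-chromatic subgraph $K\subseteq H$; then $K\cup\{z\}$ needs $\chi(H)+1$ colours. The point is that $K$ must be chosen with its vertices at pairwise distinct depths: for $k\in K$ the adjacency $z\sim k$ reads $\depth(z)+\depth(k)-2\depth(\mathrm{lca}(z,k))=d$, so distinct depths $\depth(k)$ prescribe distinct lca-depths, which a single root-to-$z$ path can realise simultaneously, the branching $q-1\ge 2$ always supplying the free child needed to peel $z$ off at each prescribed level. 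Unwinding from $d$ down to a base depth of order $q-1$, each halving should then contribute one colour on top of the base contribution of about $\log_2(q-1)$, and tracking the constants through the depth budget is what produces the precise $\log_2(\tfrac d4+q-1)$.

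The step I expect to be the genuine obstacle is exactly this increment: maintaining, as an induction invariant, that the exact $\delta$-th power of $T_q^\delta$ contains a $\chi$-critical subgraph spread across distinct depths and apex-able from outside. The difficulty is that adding the apex $z$ tends to create critical subgraphs with two vertices sharing a depth — already at $\delta=2$ the only triangles use two leaves at the same depth, and in a binary tree that is precisely the configuration that blocks the next apex, since the common lower ancestor has no third child to host $z$. Making the induction robust will require either re-selecting a depth-spread critical subgraph at each scale, or — exploiting that the truth is $\Theta(d/\log d)$ and hence that any honest $\Omega(\log d)$ lower bound has enormous slack — weakening the per-step gain to one colour per several halvings and absorbing the loss into the constant $\tfrac14$. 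Pinning down a clean depth-spread critical subgraph that survives doubling is therefore where the real work, and the exact constants, live.
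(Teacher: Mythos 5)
The distance-doubling embedding $D$ is correct and does give the monotonicity $\chi(T_q^d,d)\ge\chi(T_q^{d/2},d/2)$, but everything that could produce a $\log d$ term rests on the strict increment $\chi(T_q^d,d)\ge\chi(T_q^{d/2},d/2)+1$, and that step is not proved: you flag it yourself as ``the genuine obstacle'' and even exhibit the configuration that defeats the apex construction (a critical subgraph forced to contain two vertices at the same depth, whose common lower ancestor has no free child in a $(q-1)$-ary tree with $q-1=2$). No mechanism is offered for maintaining a depth-spread critical subgraph across scales, so as written the argument certifies only the base case, i.e.\ the clique bound $q-1$ that was already known. There is also a second, independent obstruction: the theorem is claimed for every even $d$, but when $d\equiv 2\pmod 4$ the number $d/2$ is odd, and the exact $(d/2)$-th power of any tree is then bipartite, so $\chi(T_q^{d/2},d/2)=2$ and the recursion bottoms out after one halving. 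The scheme could therefore yield $\Omega(\log d)$ only for $d$ divisible by a large power of $2$, even if the increment were established.

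For contrast, the paper's proof is not inductive in $d$ at all and needs no critical subgraphs. It fixes a root-to-leaf path, assigns to each vertex $v$ near depth $d/2$ the \emph{set} $S_v$ of colors appearing on $L(v,\tfrac{d}{2}-1)$, and shows (i) siblings get disjoint sets, and (ii) for each $1\le k\le d/4$ the set $S_{u^{2k-1}}$ contains a color missing from $S_v$ for a fixed child $v$ of a depth-$\tfrac{d}{2}$ vertex $u$. Hence the $\tfrac{d}{4}+q-1$ sets $S_{u^{2k-1}}$ and $S_w$ ($w$ a child of $u$) are pairwise distinct subsets of a $C$-element color set, giving $2^C\ge \tfrac{d}{4}+q-1$ directly. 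Some such one-shot ``distinct color-sets'' (shift-graph) counting is the idea your plan is missing; without it, or a proof of the per-halving increment, the proposal does not establish the stated bound.
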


\begin{proof}
Consider any coloring of $T_q^d$ with colors $1,2,\ldots, C$, such that
vertices at distance precisely $d$ apart have distinct colors. For any
vertex $v$ at depth at most $\tfrac{d}2+1$ in $T_q^d$, the set of colors appearing in
$L(v,\tfrac{d}2-1)$ is denoted by $S_v$. Observe that if $v$ and $w$
have the same parent, then $S_v$ and $S_w$ are disjoint
since for any $ x\in L(v,\tfrac{d}2-1)$ and $y\in L(w,\tfrac{d}2-1)$,
$x$ and $y$ are at distance $d$.

\smallskip

Fix some vertex $u$ at depth at most $\tfrac{d}2$ in $T_q^d$ and some child $v$ of $u$. We claim that:

\begin{claim}\label{cl:colorchild}
For any integer $1\le k \le \tfrac{\depth(u)}2$, there is a color of
$S_{u^{2k-1}}$ that does not appear in $S_v$.
\end{claim}

To see that Claim~\ref{cl:colorchild} holds, observe that in the subtree of $T_q^d$
rooted in $u^k$, there is a vertex of $L(u^{2k-1},\tfrac{d}2-1)$ at distance $d$ from all the
elements of $L(v,\tfrac{d}2-1)$. The color of such a vertex does not
appear in $S_v$, therefore Claim~\ref{cl:colorchild} holds.

\medskip

In particular, Claim~\ref{cl:colorchild} implies that all the sets
$\{S_{u^{2k-1}}\, | \, 1\le k \le d/4\}$ and $\{S_w\,|\, w \mbox{ is a
child of }u\}$ are
pairwise distinct. Since there are $\tfrac{d}4+q-1$ such sets, we have
$\tfrac{d}4+q-1\le 2^C$ and therefore $C\ge \log_2(\tfrac{d}4+q-1)$, as desired.
\end{proof}

It was observed by St\'ephan Thomass\'e that the proof of
Theorem~\ref{thm:loext1} only uses a small fraction of the graph
$T_q^d$. Consider for simplicity the case $q=3$, and define $P_3^d$ as
the graph obtained from a path $P=v_0,v_1,\ldots,v_d$ on $d$ edges, by
adding, for each $1\le i\le d$, a path on $i$ edges ending at $v_i$
(see Figure~\ref{fig:P}). This graph is an induced subgraph of $T_q^d$ and the
proof of Theorem~\ref{thm:loext1} directly shows the following\footnote{St\'ephan
Thomass\'e noticed that this can also be deduced from the fact that
the vertices at depth at least $\tfrac{d}2$ and at most $d$ in
the exact $d$-th power of $P_3^d$ induce a \emph{shift graph}.}.

\begin{corollary}\label{cor:loext}
For any even integer $d$, $\chi(P_3^d,d)\ge \log_2(d+8)-2$. 
\end{corollary}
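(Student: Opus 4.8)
The plan is to show that $P_3^d$, viewed as an induced subgraph of $T_3^d$, already contains every vertex that the proof of Theorem~\ref{thm:loext1} (specialized to $q=3$) actually consults, so that the very same counting argument applies with no change to the bound. First I would fix the embedding explicitly: root $P_3^d$ at $v_d$, so that the main path $v_d,v_{d-1},\dots,v_0$ descends with $v_i$ at depth $d-i$, and for each $i\ge 1$ the pendant path of length $i$ attached at $v_i$ occupies the second child-subtree of $v_i$ and reaches down to depth exactly $d$. With this embedding $P_3^d$ fits inside $T_3^d$ and is induced, so all distances invoked below are genuinely the graph distances in $P_3^d$.

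Next I would instantiate the argument at the main-path vertex $u=v_{d/2}$, which has depth $d/2$. Its odd ancestors are $u^{2k-1}=v_{d/2+2k-1}$ for $1\le k\le d/4$, and its two children are $w_1=v_{d/2-1}$ (on the main path) and $w_2$, the first vertex of the pendant path at $v_{d/2}$. All of these lie in $P_3^d$, and each set $L(\,\cdot\,,d/2-1)$ that I need is nonempty: for instance $v_{2k}\in L(u^{2k-1},d/2-1)$, the vertex $v_0\in L(w_1,d/2-1)$, and the far endpoint of the pendant path at $v_{d/2}$ lies in $L(w_2,d/2-1)$. The disjointness of $S_{w_1}$ and $S_{w_2}$ is then immediate, since any two of their witnesses lie in distinct child-subtrees of $u$ and are therefore at distance exactly $d$.

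The one point that really needs checking is that the witness vertex produced in the proof of Claim~\ref{cl:colorchild} survives in the sparse graph $P_3^d$. In the full tree this witness may be any descendant of $u^k$ at distance $d/2-k$ that branches off at $u^k$ away from $u$; here I would simply take it on the pendant path attached at $u^k=v_{d/2+k}$, which has length $d/2+k\ge d/2-k$ and hence contains a vertex $x$ at distance $d/2-k$ from $v_{d/2+k}$. A short distance computation (the lowest common ancestor of $x$ and any $y\in L(v,d/2-1)$ is $u^k$, giving $\operatorname{dist}(x,y)=(d/2-k)+(d/2+k)=d$) confirms both that $x\in L(u^{2k-1},d/2-1)$ and that its color is forbidden throughout $L(v,d/2-1)$, so Claim~\ref{cl:colorchild} holds inside $P_3^d$ for both children $v$ of $u$. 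To obtain pairwise distinctness of the ancestor-sets $S_{u^{2k-1}}$ I would reapply the same claim with base vertex the parent $v_{d/2+2k'}$ of $u^{2k'-1}$ and distinguished child $u^{2k'-1}$: its odd ancestor $(v_{d/2+2k'})^{2(k-k')-1}$ is exactly $u^{2k-1}$, so the claim yields a color in $S_{u^{2k-1}}\setminus S_{u^{2k'-1}}$, and once more the required witness sits on a pendant path of $P_3^d$.

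Finally, having verified that all $\tfrac d4+2$ sets $S_{u^{2k-1}}$ ($1\le k\le d/4$) and $S_{w_1},S_{w_2}$ are pairwise distinct subsets of the color set $\{1,\dots,C\}$, the counting step of Theorem~\ref{thm:loext1} gives $\tfrac d4+2\le 2^C$, that is $C\ge\log_2(\tfrac d4+2)=\log_2(d+8)-2$. The only real obstacle is the bookkeeping in the preceding paragraph: one must make sure that every descendant invoked—both the claim's witnesses and the elements of the various $L(\,\cdot\,,d/2-1)$—can be routed through the pendant paths rather than through the branches of $T_3^d$ that $P_3^d$ omits, and that reapplying the claim at the shifted base vertices keeps them at depth at most $d/2$, so that enough odd ancestors remain available for the count.
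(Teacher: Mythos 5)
Your proposal is correct and follows exactly the route the paper intends: the paper's proof of Corollary~\ref{cor:loext} is simply the observation that the argument of Theorem~\ref{thm:loext1} (with $q=3$) goes through verbatim on $P_3^d$ viewed as a subtree of $T_3^d$, and your write-up supplies precisely the missing bookkeeping, namely that every witness required by Claim~\ref{cl:colorchild} and every set $L(\cdot,d/2-1)$ can be realized on the pendant paths. The distance computations and the final count $\tfrac d4+2\le 2^C$ giving $C\ge\log_2(d+8)-2$ all check out.
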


\begin{figure}[htbp]
\begin{center}
\includegraphics[scale=1]{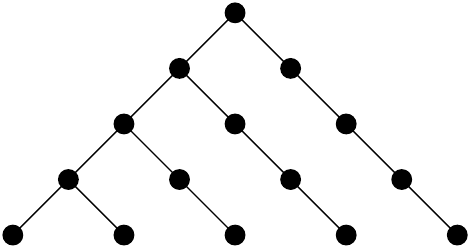}
\caption{The graph $P_3^4$.\label{fig:P}}
\end{center}
\end{figure}

The proof
of Theorem~\ref{thm:loext1} can be refined to prove the following 
better estimate for $T_q^d$, showing that the upper bound of
Theorem~\ref{thm:upext} is (asymptotically) tight within a constant multiplicative
factor of 8.

\begin{theorem}\label{thm:loext2}
For any integer $q\ge 3$ and
every even integer $d\ge 2$, $\chi(T_q^d,d)\ge \tfrac{d \log(q-1)}{4\log(d/2) + 4 \log(q-1)}$.
\end{theorem}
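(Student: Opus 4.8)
The plan is to push the proof of Theorem~\ref{thm:loext1} further by reading colors at the \emph{correct depth} for each ancestor and by letting the branching of $T_q^d$ enter multiplicatively rather than additively. The basic fact I would reuse is a depth-sensitive form of sibling disjointness: if $w$ is a vertex at depth $D\le \tfrac d2$, then any two vertices lying at depth $D+\tfrac d2$ in two \emph{distinct} subtrees hanging below $w$ have their common ancestor at $w$ and are therefore at distance exactly $d$. Consequently the colors occurring at depth $D+\tfrac d2$ are partitioned among the children of $w$: each such color is ``owned'' by at most one child. The weakness of Theorem~\ref{thm:loext1} is that it reads every set $S_v$ at the single relative depth $\tfrac d2-1$ and then merely counts \emph{distinct} subsets, so branching contributes only the additive term $q-1$ and a full logarithm is lost.

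I would then fix the scale $k$ to be the largest integer with $(q-1)^{k-1}\le \tfrac d2$, so that $k\le \log_{q-1}\!\big((q-1)\tfrac d2\big)=\tfrac{\log(d/2)+\log(q-1)}{\log(q-1)}$, and set $L=\lfloor \tfrac d{4k}\rfloor$. Place checkpoints at depths $0,2k,4k,\dots,\tfrac d2$, so there are $L$ of them, and to each vertex $v$ at depth $\tfrac d2$ associate the word recording, at every checkpoint $w$ on the root-to-$v$ path, the color seen at depth $\depth(w)+\tfrac d2$ inside the child of $w$ through which the path descends. By the ownership statement, if two such vertices take different children at some checkpoint then the corresponding letters are forced to differ. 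Since the $2k$ gaps give $(q-1)^{2k}>(\tfrac d2)^2$ branches between consecutive checkpoints, far more than the $L$ letters to be kept apart, one can choose the $v$'s so that their checkpoint-branches realise all $(q-1)^L$ possibilities. The goal is to encode these words as \emph{pairwise distinct subsets} of the color set, which would give $(q-1)^L\le 2^{C}$, that is $C\ge L\log_2(q-1)\ge \lfloor \tfrac d{4k}\rfloor$; substituting the value of $k$ then yields the stated estimate.

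The delicate point, on which the whole argument turns, is exactly this last encoding: reading the words as mere \emph{sequences} in a $C$-letter alphabet only gives $(q-1)^L\le C^{L}$, i.e.\ the trivial $C\ge q-1$, so one must upgrade ``different checkpoint-branches force different letters'' into genuine distinctness of the \emph{global} sets. Here the naive reading fails, because two witnesses read at checkpoints whose depths differ by $2k$ sit, through their common ancestor, at distance $d+2k$ rather than $d$; there is no direct distance-$d$ conflict between letters attached to different checkpoints, and a color read at one checkpoint could a priori reappear at another. Overcoming this is the main obstacle. I would attempt it by keeping the comparison local, as in Claim~\ref{cl:colorchild}: at the shallowest checkpoint where two paths diverge, exhibit a single vertex at distance exactly $d$ from the entire block of material recorded below it, so that this block contributes a fresh color separating the two sets. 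Arranging this \emph{simultaneous} separation while the construction climbs — keeping all recorded witnesses at one controlled depth inside a single subtree, so that one vertex meets all of them at distance $d$ at once — is the crux; the condition $(q-1)^{k-1}\le \tfrac d2$ is dictated precisely by the need to always have a fresh branch available when this separation is performed, and the final optimisation over $k$ is then a routine substitution.
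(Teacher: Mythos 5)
Your proposal correctly isolates where Theorem~\ref{thm:loext1} loses a logarithm, and the scale $k\approx\log_{q-1}(d/2)$ with $L\approx d/(4k)$ checkpoints is the right order of magnitude. But the argument has a genuine gap at exactly the step you yourself flag as the crux: converting the $L$-letter words into $(q-1)^L$ \emph{pairwise distinct subsets} of the $C$ colors. The ownership fact only separates the letters of two words read \emph{at the same checkpoint}; it says nothing about a letter of one word colliding with a letter of the other word read at a different checkpoint, and without excluding that you only get the trivial $C^L\ge (q-1)^L$. The repair you sketch --- at the shallowest divergence checkpoint $w$, exhibit one vertex at distance exactly $d$ from the entire block of witnesses recorded below $w$ --- cannot work as stated: those witnesses sit at depths $\depth(w')+\tfrac d2$ for the \emph{various} checkpoints $w'$ below $w$, hence at several distinct depths, and a vertex meeting all of them through the common ancestor $w$ is at distance exactly $d$ only from those at the single depth $\depth(w)+\tfrac d2$ (the deeper ones are at distance $d+(\depth(w')-\depth(w))>d$, so nothing forbids their colors from recurring). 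You acknowledge this (``keeping all recorded witnesses at one controlled depth \dots is the crux'') but do not resolve it; in addition, ``the color seen at depth $\depth(w)+\tfrac d2$ inside the child'' is a set of colors, not a single letter, and no consistent choice of representative is specified. As it stands the proposal is a plan whose central step is missing, not a proof.

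For comparison, the paper proves Theorem~\ref{thm:loext2} by an entirely different mechanism that sidesteps this difficulty. Fix a color $c$, run a uniform random walk $r=v_0,v_1,\dots,v_{d/2}$ down the tree, and track the set of even depths in $(d/2,d]$ at which $c$ appears in the subtree rooted at the current vertex. On an auxiliary graph whose vertices are these depths (two depths becoming adjacent once the walk passes their ``meeting depth'' $\tfrac{i+j-d}{2}$), the potential $\cE_k=\sum_i (q-1)^{\deg(i)}$ is a supermartingale: when a pair $i,j$ reaches its critical depth, either one child inherits both (probability $\tfrac1{q-1}$) or neither survives. Comparing $\cE_0\le \tfrac d4$ with $\cE_{d/2}=\omega(q-1)^{\omega-1}$ and applying Jensen's inequality bounds the expected number of surviving depths by $\tfrac{\log(d/2)}{\log(q-1)}+1$, and summing over colors gives the theorem. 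This averaging argument never needs a single witness at distance exactly $d$ from material spread over several depths, which is precisely where your construction gets stuck.
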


\begin{proof}
Consider any coloring of $T_q^d$ with colors $1,2,\ldots, C$, such that
vertices at distance precisely $d$ apart have distinct colors. 
We perform a random walk $v_0,v_1,\ldots,v_d$ in $T_q^d$ as follows: we
start with $v_0=r$, and for each $i\ge 1$, we choose a child of $v_i$
uniformly at random and set it as $v_{i+1}$. Note that the depth of
each vertex $v_i$ is precisely $i$.

\smallskip

From now on we fix a color $c\in \{1,\ldots,C\}$. For any vertex $v$
of $T_q^d$, the set of vertices contained in the subtree of $T_q^d$ rooted in $v$
is denoted by $V_v$, and we set $A_v=\{\mbox{depth}(u)\,|\, u \in V_v
\mbox{ and } u\mbox{ has color } c\}$. When $v=v_i$, for some integer
$0\le i \le d$, we write $A_i$ instead of\nolinebreak\ $A_{v_i}$.

\begin{claim}\label{cl:obs1}
Assume that for some even integers $i$ and $j $ with $2 \le i<j \le d$, and for some vertex $v$ at depth
$\tfrac{i+j-d}{2}$, the set $A_v$ contains both $i$
and $j$. Then $v$ has precisely one child $u$ such that $A_u$ contains
$i$ and $j$, and moreover all the children $w$ of $v$ distinct from
$u$ are such that $A_w$ contains neither $i$ nor $j$.
\end{claim}

To see that Claim~\ref{cl:obs1} holds, simply note that
$\tfrac{i+j-d}{2}<i<j$ and if two
vertices $u_1,u_2$ colored $c$ are respectively at depths $i$ and $j$, and their
common ancestor is $v$, then they are at distance $d$ in $T_q^d$ (which
contradicts the fact that they were assigned the same color). 
Indeed, the distance of $u_1$ to $v$ is $i-\tfrac{i+j-d}{2}$ and the distance
of $u_2$ to $v$ is $j-\tfrac{i+j-d}{2}$. This proves the claim.

\medskip

We now define a family of graphs $(G_k)_{0\le k \le d/2}$ as
follows. For any $0\le k \le \tfrac{d}2$, the vertex-set $V(G_k)$ of $G_k$ is
the set $A_k\cap 2\bN \cap (d/2,d]$, and
two (distinct) even integers $i,j\in A_k$ are adjacent in $G_k$ if and only if
$\tfrac{i+j-d}2<k$. For each
$0\le k\le \tfrac{d}2$ we define the \emph{energy} $\cE_k$ of $G_k$ as
follows: $\cE_k=\sum_{i\in V(G_k)} (q-1)^{\text{deg} (i)}$, where
$\text{deg} (i)$ denotes the degree of the vertex $i$ in $G_k$.

\medskip

Note that each graph $G_k$ depends on the (random) choice of
$v_1,v_2,\ldots,v_k$. 

\begin{claim}\label{cl:obs2}
For any $0\le k\le
\tfrac{d}2-1$, $\Ex (\cE_{k+1} )\le  \Ex (\cE_{k} )$.
\end{claim}

Assume that $v_1,v_2,\ldots,v_k$ (and therefore also $G_k$) are
fixed. Observe that $G_{k+1}$ is obtained from $G_k$ by
possibly removing some vertices and adding some edges. Thus, $\cE_{k+1}$
can be larger than $\cE_{k}$ only if $G_{k+1}$ contains edges
that are not in $G_k$. Therefore, it suffices to consider the contributions 
of those pairs of nonadjacent vertices in $G_k$ which could become
adjacent in $G_{k+1}$ (since these correspond to pairs $i,j$ with $k=\tfrac{i+j-d}2$,
these pairs are pairwise disjoint), and prove that these contributions are, in expectation, equal to 0.  
Fix a pair of even integers $i<j$ in $V(G_k)$ with
$k=\tfrac{i+j-d}2$ (and note that $i$ and $j$ are not adjacent in
$G_k$). By Claim~\ref{cl:obs1}, either $v_{k+1}$ is such that $A_{k+1}$
contains $i$ and $j$ (this event occurs with probability
$\tfrac{1}{q-1}$), or $A_{k+1}$ contains neither $i$ nor $j$ (with
probability $1-\tfrac1{q-1}$). As a consequence, for any $i<j$ in $V(G_k)$ with
$k=\tfrac{i+j-d}2$, with probability
$\tfrac{1}{q-1}$ we add the edge $ij$ in $G_{k+1}$ and with
probability  $1-\tfrac1{q-1}$ we remove vertices $i$ and $j$ from
$G_{k+1}$. This implies that for any $i,j \in V(G_k)$, $i<j$, with
$k=\tfrac{i+j-d}2$, with probability
$\tfrac{1}{q-1}$ we have contribution at most $(q-1)^{\text{deg} (i)+1}+(q-1)^{\text{deg}
  (j)+1}-(q-1)^{\text{deg} (i)}-(q-1)^{\text{deg}
  (j)}=(q-2)((q-1)^{\text{deg} (i)}+(q-1)^{\text{deg} (j)})$ to
$\cE_{k+1}$ (where $\text{deg}$ refers to the degree in $G_k$) and with
probability  $1-\tfrac1{q-1}$ we have a contribution of at most $-(q-1)^{\text{deg}
  (i)}-(q-1)^{\text{deg} (j)}$ to $\cE_{k+1}$.
  Thus, the expected contribution of such a pair $i,j$ is at most
  $\tfrac1{q-1} (q-2)((q-1)^{\text{deg} (i)}+(q-1)^{\text{deg}
  (j)})-\tfrac{q-2}{q-1}((q-1)^{\text{deg} (i)}+(q-1)^{\text{deg}
  (j)}) = 0. $
  
Summing over all such pairs $i,j$, we obtain $\Ex (\cE_{k+1})\le \Ex (\cE_{k} ).$
This proves Claim~\ref{cl:obs2}.

\medskip

Since $2 \le i<j \le d$, we have $\tfrac{i+j-d}{2}\le \tfrac{d}2-1$, and in
particular it follows that $G_{d/2}$ is a (possibly
empty) complete graph, whose number of vertices is denoted by
$\omega\ge 0$. Note that the energy $\cE$ of a complete graph on
$\omega$ vertices is equal to $\omega (q-1)^{\omega-1}$, while the
energy $\cE_0$ of $G_0$ is equal to $|A_0\cap 2\bN \cap (d/2, d]|\le \tfrac{d}{4}$.
For a vertex $u\in L(r,\tfrac{d}2)$, let $\omega_u=|A_u\cap 2\bN \cap (d/2, d]|$
(this is the number of distinct even depths at which a vertex colored
$c$ appears in the subtree of height $\tfrac{d}2$ rooted in $u$). It follows
from Claim~\ref{cl:obs2} that the average of $\omega_u (q-1)^{\omega_u-1}$, over all
vertices $u\in L(r,\tfrac{d}2)$, is at most $\tfrac{d}4$. Let $a$ be
the average of $\omega_u$, over all vertices  $u\in
L(r,\tfrac{d}2)$. By 
Jensen's inequality and the convexity of the function $x\mapsto
x(q-1)^{x-1}$ for $x\ge 0$, we have that $a(q-1)^{a-1}\le
\tfrac{d}4$, and thus $a\le \tfrac{\log(d/2)}{\log(q-1)} +
1$.

Note that $a$ depends on the color $c$ under
consideration (to make this more explicit, let us now write $a_c$ instead of $a$). Since there are
$\tfrac{d}4$ even depths between depth $\tfrac{d}2$ and depth $d$,
there is a color $c\in \{1,\ldots,C\}$ such that $a_c\cdot C\ge
\tfrac{d}4$ and thus, $C\ge \tfrac{d}{4a_c}\ge \tfrac{d \log(q-1)}{4\log(d/2) + 4 \log(q-1)}$, as desired.
\end{proof}

We now explain how the results proved above give a negative answer to Problem~\ref{prob:NO}.
Let $U_3^d$ (resp. $Q_3^d$) be obtained from $T_3^d$ (resp. $P_3^d$) by
adding an edge $uv$ for any pair of vertices $u,v$ having the same
parent. Note that for any $d$, $U_3^d$ and $Q_3^d$ are outerplanar (and thus,
planar) and chordal, and $Q_3^d$ has pathwidth 2 ($U_3^3$ and $Q_3^5$ are depicted
in Figure~\ref{fig:Q}) and the original copies of $T_3^d$ and $P_3^d$
are spanning
trees of $U_3^d$ and $Q_3^d$, respectively. In the remainder of this section, whenever we write
$T_3^d$, we mean \emph{the original copy of $T_3^d$ in $U_3^d$}.

\begin{figure}[htbp]
\begin{center}
\includegraphics[scale=1]{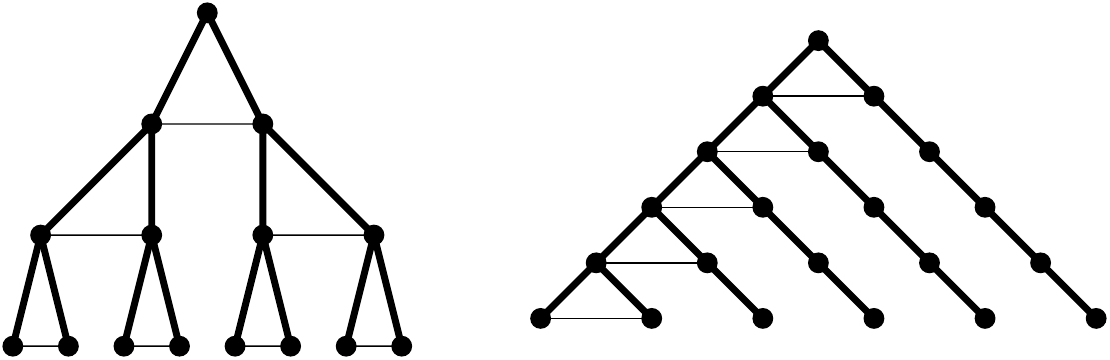}
\caption{The graphs $U_3^3$ (left) and $Q_3^5$ (right). The bold edges represent the
  original copies of $T_3^3$ and $P_3^5$, respectively. \label{fig:Q}}
\end{center}
\end{figure}

Observe that for any two vertices $u$ and $v$ distinct from the root of
$T_3^d$, $u$ and $v$ are at distance $d$ in $T_3^d$ if and only if
they are at distance $d-1$ in $U_3^d$ (since the depth of $T_3^d$ is
$d$, the fact that $u$ and $v$ differ from the root and are at
distance $d$ apart implies that none of the two vertices is an ancestor of the
other). The same property holds for $Q_3^d$ and $P_3^d$. As a consequence, for any odd integer $d$, $\chi(U_3^{d+1},d)$
and $\chi(T_3^{d+1},d+1)$ differ by at most one, and $\chi(Q_3^{d+1},d)$
and $\chi(P_3^{d+1},d+1)$ also differ by at most one. Using this observation, we immediately obtain the following
corollary of Theorem~\ref{thm:loext2} and Corollary~\ref{cor:loext}, which gives a negative answer
to Problem~\ref{prob:NO}.

\begin{corollary}\label{cor:vdhn}
For any odd integer $d$, $$\chi(U_3^{d+1},d)\ge \tfrac{(d+1)
  \log(2)}{4\log((d+1)/2)+4\log(2)}-1 \mbox{ and }\chi(Q_3^{d+1},d)\ge \log_2(d+8)-3. $$
\end{corollary}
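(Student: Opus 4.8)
The plan is to derive both inequalities directly from Theorem~\ref{thm:loext2} and Corollary~\ref{cor:loext} by exploiting the parity shift recorded just above the statement. Since $d$ is odd, the integer $d+1$ is even, so both earlier lower bounds apply to the parameter $d+1$. Concretely, I would first instantiate Theorem~\ref{thm:loext2} with $q=3$ and the even integer $d+1$, giving
$$\chi(T_3^{d+1},d+1)\ge \tfrac{(d+1)\log(2)}{4\log((d+1)/2)+4\log(2)},$$
and instantiate Corollary~\ref{cor:loext} with the even integer $d+1$, giving $\chi(P_3^{d+1},d+1)\ge \log_2(d+9)-2$.

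The second step is to transport these bounds from $T_3^{d+1}$ and $P_3^{d+1}$ to $U_3^{d+1}$ and $Q_3^{d+1}$. Here I would invoke the observation established immediately before the statement: for vertices distinct from the root, being at distance $d+1$ in $T_3^{d+1}$ is equivalent to being at distance $d$ in $U_3^{d+1}$, and likewise for $P_3^{d+1}$ and $Q_3^{d+1}$. This identifies the edge sets of the two exact power graphs up to the single root vertex, so $\chi(U_3^{d+1},d)$ and $\chi(T_3^{d+1},d+1)$ differ by at most one (and similarly for the path-like pair). Combining this with the first step yields
$$\chi(U_3^{d+1},d)\ge \chi(T_3^{d+1},d+1)-1\ge \tfrac{(d+1)\log(2)}{4\log((d+1)/2)+4\log(2)}-1$$
and $\chi(Q_3^{d+1},d)\ge \chi(P_3^{d+1},d+1)-1\ge \log_2(d+9)-3\ge \log_2(d+8)-3$, where the final inequality merely weakens the estimate to the stated form.

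The computation is essentially bookkeeping, so the only point requiring genuine care is the ``differ by at most one'' transfer, and in particular the handling of the root. The distance equivalence holds only for pairs of non-root vertices, so a coloring valid for one exact power need not be valid for the other on pairs involving the root. To make the direction precise, I would start from an optimal coloring of the exact $d$-th power of $U_3^{d+1}$, note that it is already proper for all non-root pairs of the exact $(d+1)$-th power of $T_3^{d+1}$, and then recolor the root with a fresh color; this shows $\chi(T_3^{d+1},d+1)\le \chi(U_3^{d+1},d)+1$, which rearranges to the $-1$ appearing in the bound. Once this asymmetry is stated cleanly, the corollary follows immediately by substitution.
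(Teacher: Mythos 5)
Your proposal is correct and follows essentially the same route as the paper: instantiate Theorem~\ref{thm:loext2} and Corollary~\ref{cor:loext} at the even parameter $d+1$, then transfer via the observation that non-root pairs at distance $d+1$ in $T_3^{d+1}$ (resp.\ $P_3^{d+1}$) are exactly the pairs at distance $d$ in $U_3^{d+1}$ (resp.\ $Q_3^{d+1}$), so the two chromatic numbers differ by at most one. Your explicit handling of the root (recoloring it with a fresh color to get $\chi(T_3^{d+1},d+1)\le \chi(U_3^{d+1},d)+1$) is a correct and slightly more careful rendering of the step the paper leaves implicit.
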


The graphs $U_3^{d+1}$ and its exact $d$-th power have $n=2^{d+2}$
vertices, and thus the chromatic number of the exact $d$-th power of
$U_3^{d+1}$ grows as $\Omega\big(\tfrac{\log n}{\log \log
  n}\big)$. The graphs $Q_3^{d+1}$ and its exact $d$-th power have
$n={d+2 \choose 2}$ vertices, and thus the chromatic number of the exact $d$-th power of
$Q_3^{d+1}$ grows as $\Omega({\log n})$. It is not difficult (using
Theorem~\ref{thm:upext} for $U_3^{d+1}$) to show that these bounds are
asymptotically tight.

\smallskip

It was recently proved by Quiroz~\cite{Qui17} that if $G$ is a chordal graph of
clique number at most $t\ge 2$, and $d$ is an odd number, then
$\chi(G,d)\le {t \choose 2}(d+1)$. By Corollary~\ref{cor:vdhn}, the
graph $U_3^d$ shows that this is asymptotically best possible (as $d$
tends to infinity), up to a
$\log d$ factor.

\section{Interval coloring}

For an integer $d$ and a real $c>1$, recall that $\chi(T_q,[d,cd])$
denotes the
smallest number of colors in a coloring of the vertices of $T_q$ such
that any two vertices of $T_q$ at distance at least $d$ and at most
$cd$ apart have
distinct colors. Parlier and
Petit~\cite{PP17} proved that $$q(q-1)^{\lfloor cd/2\rfloor -\lfloor
  d/2\rfloor}\le \chi(T_q,[d,cd])\le (q-1)^{\lfloor
  cd/2+1\rfloor}(\lfloor cd \rfloor+1).$$ In this final section, we
prove that their lower bound (which is proved by finding a set of
vertices of this cardinality that are pairwise at distance at least $d$ and at most
$cd$ apart in $T_q$) is asymptotically tight.

\begin{theorem}\label{thm:upint}
For any integers $q\ge 3$ and $d$ and any real $c>1$, $\chi(T_q,[d,cd])\le \tfrac{q}{q-2}(q-1)^{\lfloor
cd/2\rfloor -d/2+1}+cd+1$.
\end{theorem}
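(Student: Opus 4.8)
The plan is to follow the template of the proof of Theorem~\ref{thm:upext}: reduce the problem to properly coloring a sparse set of \emph{special} vertices with a greedy procedure, and then propagate these colors downward through carefully chosen shells so that every vertex of $T_q$ receives exactly one color. Set $a=\lceil d/2\rceil$ and $b=\lfloor cd/2\rfloor$, and write $R=b-a+1\le \lfloor cd/2\rfloor-d/2+1$. First I would record the structure of the conflicts. If $x,y$ are at distance $\ell\in[d,cd]$ with lowest common ancestor $z$, and $s\ge t$ denote the two downward distances (so $s+t=\ell$), then $s\ge d/2$; thus the constraints split into \emph{ancestor} conflicts (one of $x,y$ an ancestor of the other, so their depths differ by some value in $[d,cd]$) and \emph{branching} conflicts (with $z$ a proper common ancestor). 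The key quantitative point is that $t\le \lfloor cd/2\rfloor=b$ while $s\ge a$, so the branching that actually needs to be distinguished is confined to the top $R=b-a+1$ levels below $z$.

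Next I would define the special vertices (those at depth a multiple of a suitable period, together with a near-root exception) and, for each special vertex $v$, a shell consisting of its descendants in a distance-window of the right width, chosen so that the windows tile all depths and so that the bottom $\approx d/2$ levels of each branch get \emph{collapsed} into a single color $C(v)$. Each vertex is then assigned $C(v)$ for the unique special vertex $v$ whose shell contains it, and all vertices of $\depth$ less than the first window get one extra color. The heart of the argument is to verify that if $x\in$ shell$(u)$ and $y\in$ shell$(v)$ lie at distance $\ell\in[d,cd]$, then $u\ne v$ and $u,v$ are \emph{related} in the auxiliary relation that $C$ is required to respect, so that $C(u)\ne C(v)$; this uses exactly the conflict analysis above, with the ancestor case producing a bounded chain of forbidden ancestors and the branching case confining $u,v$ to a ball of radius $R$ around (a neighbourhood of) their common ancestor.

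I would then obtain $C$ greedily in breadth-first order, bounding the set of colors forbidden for each special vertex as a \emph{sum}: the branching-type relatives contribute at most the number of vertices in a ball of radius $R$ in $T_q$, namely at most $\tfrac{q}{q-2}(q-1)^{R}\le \tfrac{q}{q-2}(q-1)^{\lfloor cd/2\rfloor-d/2+1}$ colors, while the ancestor-type constraints along a single root-to-$v$ path contribute at most $cd$ colors; the near-root patch contributes the final $+1$. Summing these gives the claimed palette size, and a short check that every vertex indeed receives precisely one color completes the argument.

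The step I expect to be the main obstacle is obtaining an \emph{additive} $cd+1$ term rather than a multiplicative factor of order $cd$. A naive product coloring (a branching coordinate of size $\approx (q-1)^{R}$ together with $\depth$ taken modulo $cd+1$ to kill all depth-difference conflicts at once) is correct but yields the product $(cd+1)(q-1)^{R}$, which is far from the target. Avoiding this forces the propagation/collapsing scheme, whose delicate point is choosing the shell width and the special-vertex period so that (i) the branching exponent drops precisely from $\lfloor cd/2\rfloor$ down to $R=\lfloor cd/2\rfloor-a+1$, keeping the main term equal to a ball of radius $R$, and (ii) the surviving depth-difference (ancestor) conflicts reduce to a single path of length $O(cd)$, contributing only additively. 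Getting both reductions simultaneously, and confirming that two distinct special vertices with equal $C$-color never have shells meeting at distance in $[d,cd]$, is where the calculation will need to be carried out carefully.
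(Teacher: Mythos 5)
Your high-level plan coincides with the strategy of the paper's proof: greedily color a family of representatives so that ``nearby'' representatives receive distinct colors, propagate each representative's color to a shell of descendants at depth roughly $d/2$ below it, and spend one extra color near the root. Your accounting of the palette is also the right one --- a ball of radius $R=\lfloor cd/2\rfloor-d/2+1$ of size at most $\tfrac{q}{q-2}(q-1)^{R}$ for the branching conflicts, an \emph{additive} $cd$ for the ancestor conflicts, and $+1$ for the patch near the root. But the proposal stops where the proof begins. You never fix the period of the special vertices or the width of the shells, and you explicitly defer the one claim that carries all of the content, namely that if $x$ lies in the shell of $u$, $y$ lies in the shell of $v$, and $d\le \operatorname{dist}(x,y)\le cd$, then $v$ belongs to the forbidden set of $u$ (so that $C(u)\ne C(v)$). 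Without pinning down the parameters this verification cannot even be attempted, so as written there is no proof, only a correct guess at its shape.

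For the record, the paper's instantiation is the simplest possible one: the period is $1$. Each edge $uv$ of $T_q$ (with $u$ the parent of $v$) is colored in breadth-first order, avoiding (i) the colors of all edges $xy$ whose top endpoint $x$ lies within distance $\ell=\lfloor cd/2\rfloor-d/2$ of the ancestor of $u$ at distance $\ell$ (a set of at most $\sum_{j=0}^{\ell}q(q-1)^j\le\tfrac{q}{q-2}(q-1)^{\ell+1}$ edges), and (ii) the colors of the at most $cd$ edges on the path from $u$ to its ancestor at distance $cd$. The shell of $uv$ is the single level $L(v,\tfrac{d}{2}-1)$, so a vertex $s$ of depth at least $d/2$ inherits the color of the edge $s^{d/2}s^{d/2-1}$. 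The verification is then a clean two-case analysis on whether $t^{d/2-1}$ is an ancestor of $s$: if it is, the edge $t^{d/2-1}t^{d/2}$ is forbidden for $s^{d/2}s^{d/2-1}$ by rule (ii); if it is not, that edge lies on the $s$--$t$ path, and a short computation with the two downward distances at the common ancestor shows that $t^{d/2}$ lies within distance $\ell$ of $s^{d/2+\ell}$, so it is forbidden by rule (i). Note also that a period $k>1$ --- the device you import from Theorem~\ref{thm:upext} --- can only hurt here: it would inflate the dominant $(q-1)^{R}$ term without improving the ancestor term, which is already additive.
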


\begin{proof}
The proof is similar to the proof of Theorem~\ref{thm:upext}. 
We consider any ordering $e_1, e_2, \ldots$ of the edges of $T_q$ obtained from a
breadth-first search starting at $r$. Then, for any $i=1,2,\ldots$ in
order, we assign a color $c(e_i)$ to the edge $e_i$ as follows. Let
$e_i=uv$, with $u$ being the parent of $v$, and let $\ell=\lfloor
cd/2\rfloor -d/2$. We assign to $uv$ a color $c(uv)$ distinct from the
colors of all the edges $xy$ (with $x$ being the parent of $y$) such
that $x$ is at distance at most $\ell$ from $u^k$ (where $k$ is the
minimum of $\ell$ and the depth of $u$), or $x$ is an
ancestor of $u$ at distance at most $cd$ from $u$ (and $y$ lies on the
path from $u$ to $x$). There are at most $cd+\sum_{j=0}^\ell q(q-1)^j\le\tfrac{q}{q-2}(q-1)^{\ell+1}+d-1$ such edges, so we can color all the edges
following this procedure by using a total of at most
$\tfrac{q}{q-2}(q-1)^{\ell+1}+cd$ colors.

As in the proof of Theorem~\ref{thm:upext}, we now define our coloring of the
vertices of $T_q$ as follows: first color all the vertices at distance
at most $\tfrac{d}2-1$ from $r$ with a new color that does not appear
on any edge of $T_q$, then for each vertex $v$ with parent $u$, we
color all the vertices of $L(v,\tfrac{d}2-1)$ with color $c(uv)$. In
this vertex-coloring, at most $\tfrac{q}{q-2}(q-1)^{\ell+1}+cd+1$ colors
are used.

Assume that two vertices $s$ and $t$, at distance at least $d$ and at
most $cd$ apart, were assigned the same color. This implies that $c(s^{d/2-1}s^{d/2})=c(t^{d/2-1}t^{d/2})$. Assume without loss of
generality that the depth of $s$ is at least the depth of $t$, and
consider first the case where $t^{d/2-1}$ is an ancestor of $s$. Then
$t^{d/2}$ is an ancestor of $s^{d/2}$ at distance at most $cd$ from
$s^{d/2}$ (and $t^{d/2-1}$ lies on the path from $s^{d/2}$ to $t^{d/2}$), which contradicts the definition of our
edge-coloring $c$. Thus, we can assume that $t^{d/2-1}$ is not an
ancestor of $s$. This implies that $t^{d/2-1}t^{d/2}$ lies on the path
between $s$ and $t$, and therefore $t^{d/2}$ is at distance at most
$\ell=\lfloor
cd/2\rfloor -d/2$ from the ancestor of $s^{d/2}$ at distance $\ell$ from
$s^{d/2}$ (or simply from $r$, if the depth of $s^{d/2}$ is at most $\ell$). Again, this contradicts the definition of our coloring
$c$. We obtained a coloring of the vertices of $T_q$ with at most
$\tfrac{q}{q-2}(q-1)^{\ell+1}+cd+1$ colors in which each pair of vertices
at distance at least $d$ and at most $cd$ apart have distinct colors,
as desired. 
\end{proof}

\section*{Acknowledgement}

We are very grateful to Lucas Pastor, St\'ephan Thomass\'e, and an
anonymous reviewer for their excellent observations and comments.

\end{document}